\newtheorem{Theorem}{Theorem}
\newtheorem{Lemma}{Lemma}
\newtheorem{Proposition}{Proposition}
\newtheorem{Definition}{Definition}
\newtheorem{Corollary}{Corollary}
\theoremstyle{remark}
\newtheorem{Remark}{Remark}
\theoremstyle{remark}
\theoremstyle{plain}
\theoremstyle{remark}
\newcommand{\R}{{\mathbb{R}}}
\newcommand{\Z}{{\mathbb{Z}}}
\newcommand {\N}{{\mathbb{N}}}
\newcounter{fig}
\begin{document}
\title{Vertex Isoperimetric Inequalities for a Family of Graphs on $\Z^k$}

\author{Ellen Veomett \thanks{The first author was supported for eight weeks during the summer of 2010 through the University of
Nebraska-LincolnÕs Mentoring through Critical Transition Points grant (DMS-0838463) from the National Science Foundation.}\\
\small Saint Mary's College of California \\
\and
A.J. Radcliffe \\
\small University of Nebraska-Lincoln}

\maketitle

\begin{abstract}

We consider the family of graphs whose vertex set is $\Z^k$ where two vertices are connected by an edge when their $\ell_\infty$-distance is 1.  We prove the optimal vertex isoperimetric inequality for this family of graphs.  That is, given a positive integer $n$, we find a set $A\subset \Z^k$ of size $n$ such that the number of vertices who share an edge with some vertex in $A$ is minimized.  These sets of minimal boundary  are nested, and the proof uses the technique of compression.

We also show a method of calculating the vertex boundary for certain subsets in  this family of graphs.  This calculation and the isoperimetric inequality allow us to indirectly find the sets which minimize the function calculating the boundary.
\end{abstract}

\section{Introduction and Results}

For a metric space $(X,d)$ with a notion of volume and boundary, an isoperimetric inequality gives a lower bound on the boundary of a set of fixed volume.  Ideally, for any fixed volume, it produces a set of that volume with minimal boundary.    The most well-known  isoperimetric inequality states that, in Euclidean space, the unique set of fixed volume with minimal boundary is the Euclidean ball.   

More recently, questions of isoperimetric inequalities have arisen within various kinds of discrete spaces  \cite{MR0200192}, \cite{DiscTor}, \cite{MR1137765}, \cite{MR1909858}, \cite{MR1082843},  \cite{MR0168489}, \cite{MR1612869}.  Not only are questions regarding isoperimetric inequalities natural geometric questions, but they are known to have many implications in areas such as measure concentration  \cite{MR856576}, \cite{MR1849347}, \cite{MR1036275}, \cite{MR1899299} and the theory of random graphs \cite{MR2073175}, \cite{MR1141924}, \cite{MR1221553}, \cite{MR1872641}.

One of the more broadly known discrete isoperimetric inequalities is Harper's Theorem.  Harper's Theorem involves the discrete space $\{0,1\}^n$ and the $\ell_1$ metric; that is, the metric defined by the norm 
\begin{equation*}
||(x_1,x_2, \dots, x_n)||_1 = \sum_{i=1}^n |x_i|
\end{equation*}  
A graph $G = (V,E)$ is defined on the vertex set $V = \{0,1\}^n$ where the edge set $E$ consists of points whose distance in the $\ell_1$ metric is precisely 1:
\begin{equation*}
E = \{(u,v): u \in \{0,1\}^n, v \in \{0,1\}^n, ||u-v||_1 = 1\}
\end{equation*}
Here, the vertex boundary of a set $A \subset \{0,1\}^n$ consists of all points whose distance from $A$ in the graph metric is no more than 1:
\begin{equation*}
\partial A = \{v \in V: d(v,A) \leq 1\}.
\end{equation*} 
As usual, the distance between a point $x$ and a set $A$ is defined
\begin{equation*}
d(x,A) = \inf_{a \in A} d(x,a) = \inf_{a \in A}\{\text{the length of the shortest path from } x \text{ to } a\}
\end{equation*}
In other words, the vertex boundary consists of both $A$ and all of the neighbors of $A$.     In  \cite{MR0200192} Harper shows that one can define an ordering on the 0-1 cube $\{0,1\}^n$ such that a set of smallest vertex boundary is achieved on an initial segment.  Thus, the sets achieving the minimal boundary are nested.

In \cite{DiscTor}, Bollob\'as and Leader found an isoperimetric inequality for a graph which is also defined using the $\ell_1$ metric.  The vertices of this graph are the vertices of the discrete torus $\Z_m^n$, where $\Z_m$ denotes the integers modulo $m$ and $m$ is necessarily an even integer.  The edges of this graph are points whose distance using the $\ell_1$ metric is precisely 1.  In \cite{DiscTor}, Bollob\'as and Leader use a tool called a fractional system to show that the sets of minimal boundary of size $|\{x \in \Z_m^n: ||x-\vec{0}||_1 \leq r\}|$ ($r = 0, 1, 2, \dots$) are precisely those balls: $\{x \in \Z_m^n: ||x-\vec{0}||_1 \leq r\}$.  Thus, the sets of minimal vertex boundary of size $|\{x \in \Z_m^n: ||x-\vec{0}||_1 \leq r\}|$ ($r = 0, 1, 2, \dots$)  again are nested.

The fact that sets achieving optimality are nested is crucial in both  \cite{MR0200192} and \cite{DiscTor}, as they use the technique of compression.  Compression has been utilized in these and many other discrete isoperimetric problems to inductively find sets of minimal boundary, and relies heavily on the fact that the graph and its lower dimensional counterparts have sets of minimal boundary that are nested.    Discussions of compression as a technique in discrete isoperimetric problems can be found in \cite{MR1082842}, \cite{MR2035509},   \cite{MR1444247},  and \cite{MR1455181}. 

In the following we consider the vertex  isoperimetric inequalities of a family of graphs which was previously unstudied.  Specifically, we consider the family of graphs whose vertex set is $\Z^k$ where two vertices are connected by an edge when their $\ell_\infty$-distance is 1.   For each $n \in \N$, we produce a set in $\Z^k$ of size $n$ whose vertex boundary is minimized.  We do this by defining a well-ordering $\prec$ on $\Z^k$ and showing that a set of minimal boundary is an initial segment:

\begin{Theorem}\label{MyTheorem}
Let $I$ be an initial segment in $\Z^k$, and $A$ a finite, nonempty subset.  If $|I| = |A|$ then $|\partial I| \leq |\partial A|$.
\end{Theorem}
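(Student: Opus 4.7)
The plan is to adopt the compression strategy indicated in the introduction. For each coordinate direction $i \in \{1, \dots, k\}$ I would define a compression operator $C_i$ acting on a finite $A \subset \Z^k$ by replacing, for every line $\ell$ parallel to $e_i$, the intersection $A \cap \ell$ with the initial segment of $\ell$ (under the ordering induced by $\prec$) of the same cardinality. This operator preserves cardinality, and the core of the argument is to establish the boundary monotonicity $|\partial C_i(A)| \leq |\partial A|$.

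To prove monotonicity I would work column-by-column. Because $\partial A = A + \{-1,0,1\}^k$ in the $\ell_\infty$-graph, for each $y \in \Z^{k-1}$ the set of $i$-coordinates appearing in $\partial A$ over the column $\ell_y = \{(y,t) : t \in \Z\}$ equals $T(y) + \{-1,0,1\}$, where $T(y) = \bigcup_{y' \in y + \{-1,0,1\}^{k-1}} \pi_i(A \cap \ell_{y'})$ and $\pi_i$ denotes the $i$-coordinate. After compression, the projections $\pi_i(C_i(A) \cap \ell_{y'})$ are initial segments of the induced ordering on the $i$-axis and are thus pairwise nested, so their union $T'(y)$ is simply the largest of them, yielding $|T'(y)| \leq |T(y)|$. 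Combined with the elementary estimate $|S + \{-1,0,1\}| \geq |S| + 2$ for nonempty finite $S \subset \Z$ (with equality precisely when $S$ is an interval), this gives the columnwise bound $|\partial C_i(A) \cap \ell_y| \leq |\partial A \cap \ell_y|$, and summing over $y$ completes the step.

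The main obstacle is then the transition from fully compressed sets to actual initial segments of $\Z^k$ under $\prec$. Iterating $C_1, \dots, C_k$ produces a sequence of sets with non-increasing boundary, and termination is ensured by an integer-valued invariant such as $\sum_{x \in A} \|x\|_\infty^2$. However, the simultaneous fixed points of the $C_i$'s are only axis-initial in each direction and need not be initial segments under $\prec$ globally. I would supplement the $C_i$'s with further rearrangement operators—acting along diagonal directions, or on lower-dimensional slices via an inductive application of the theorem—whose fixed points together with those of the $C_i$'s are exactly the initial segments of $\prec$. Verifying boundary monotonicity for these auxiliary operators is likely the most delicate part of the proof, but the strategy is parallel: exploit the product structure of the $\ell_\infty$-neighborhood and reduce to one-dimensional comparisons. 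Once one can show that every finite $A$ can be driven to the initial segment of size $|A|$ by a sequence of boundary-nonincreasing moves, the theorem follows.
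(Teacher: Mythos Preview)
Your one-dimensional compression and its monotonicity proof are correct and cleanly stated; the nestedness of initial segments of $\Z$ under $\prec$ (which are indeed intervals) together with the columnwise bound $|S+\{-1,0,1\}|\ge|S|+2$ works exactly as you say. However, this is a weaker compression than the one the paper actually uses, and the place where your outline becomes vague---the passage from ``compressed in every direction'' to ``initial segment''---is precisely where the substance of the proof lies.

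The paper does not use 1-dimensional compression at all. Instead it proceeds by induction on $k$ and, for each coordinate $i$, replaces every \emph{hyperplane} section $A_{i^j}=\{x\in\Z^{k-1}:(x,j\to i)\in A\}$ by the initial segment of $\Z^{k-1}$ of the same size (this is what you allude to as ``lower-dimensional slices via an inductive application of the theorem,'' but it is the main compression, not a supplement). Monotonicity then follows from the inductive hypothesis together with the lemma that the boundary of an initial segment is again an initial segment, so the three sets $\partial((C_i)_{i^{j-1}})$, $\partial((C_i)_{i^j})$, $\partial((C_i)_{i^{j+1}})$ are nested. This already imposes a far stronger structure on a fully compressed set than your 1D operators do.

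Even so, a set that is $(k-1)$-compressed in every coordinate need not be an initial segment (the paper gives the example $\{(0,0,0),(0,0,1),(0,1,0),(1,0,0)\}$ in $\Z^3$). The paper does \emph{not} resolve this by introducing further compression operators. Instead it selects, among all compressed sets of the right size with boundary at most $|\partial A|$, one whose last element under $\prec$ is earliest, and argues by contradiction that this set must be an initial segment. The argument is a careful case analysis: one identifies the first missing element $x$ and the last present element $v$, and swaps elements between the final segment of $X$ and the first gap, using an explicit count (Lemma~\ref{count}: adding the next element to an initial segment increases the boundary by exactly $3^\ell$, where $\ell$ is its number of zero coordinates) and a technical filling lemma (Lemma~\ref{fill}) to guarantee the swap does not increase the boundary. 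Your proposal to ``supplement with rearrangement operators along diagonal directions'' is therefore not the route taken, and it is not clear that any finite family of compression-type operators would force convergence to an initial segment; the paper's jostling argument is an essentially different kind of step, and it is the heart of the proof.
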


Thus, there exist sets of minimal boundary which are nested.  (The sets of minimal boundary are not unique, as we point out at the end of Section \ref{SectionMyTheorem}).  To prove Theorem \ref{MyTheorem}, we use a version of compression which is similar to that used to prove Harper's Theorem in \cite{MR1141923}.  Compression alone is not enough to produce a set of minimal boundary; instead we show that compressing a set and a particular type of ``jostling'' eventually results in a set of minimal boundary.  We prove Theorem \ref{MyTheorem} in Section \ref{SectionMyTheorem}.

In Section \ref{Boundary Computations}, we show how one can use a ``1-dimensional compression'' technique to take a set $A \subset \Z^k$ and produce a set of the same size whose boundary is no larger.  We call such a set centralized.  Then we are able to compute the boundary of centralized sets.  This computation along with the isoperimetric inequality of Theorem \ref{MyTheorem} allows us to deduce which sets achieve the minimum value of the function which computes the boundary of centralized sets.  This boundary computation technique may be useful in finding isoperimetric inequalities for other related graphs. 

In Section \ref{Final Remarks} we make some final comments on using the ideas in Section \ref{Boundary Computations} for other graphs.

\section{The Proof of Theorem \ref{MyTheorem}}\label{SectionMyTheorem}

We first consider the graph with vertex set $\Z^k$.   Two vertices $x, y \in \Z^k$ are joined by an edge precisely when $\|x-y\|_\infty = 1$.   That is, when
\begin{equation*}
\max_{i = 1, 2, \dots, k} |x_i-y_i| = 1
\end{equation*}
where $x = (x_1, x_2, \dots, x_k)$ and $y = (y_1, y_2, \dots, y_k)$.

We define a well-ordering $\prec$ on $\Z^k$ inductively.  For $\Z^1$, the well-ordering $\prec$ is:
\begin{equation*}
0 \prec 1 \prec -1 \prec 2 \prec -2 \prec 3 \cdots
\end{equation*}

For $k>1$, the well-ordering $\prec$ on $\Z^k$ is as follows: for $u = (u_1, u_2, \dots, u_k) \in \Z^k$ define
\begin{equation*}
M(u) = \max_\prec \{u_i: i=1, 2, \dots, k\}
\end{equation*}
where the maximum is according to the previously defined well-ordering on $\Z$.  Then for $u, v \in \Z^k$ with $u \not= v$, if $M(u) \prec M(v)$, then $u \prec v$.  If $M(u) = M(v)$, define
\begin{align*}
i_u &= \min\{i: u_i = M(u) \text{ where } u = (u_1, u_2, \dots u_k)\} \\
i_v &=   \min\{i: v_i = M(v) \text{ where } v = (v_1, v_2, \dots v_k)\}
\end{align*}
If $i_v<i_u$ then $u \prec v$.  Finally, if $M(u) = M(v)$ and  $i_u = i_v$, we define
\begin{align*}
u'= (u_1, u_2, \dots, u_{i_u-1}, u_{i_u+1}, \dots u_k) \in \Z^{k-1} \\
v'= (v_1, v_2, \dots, v_{i_u-1}, v_{i_u+1}, \dots v_k) \in \Z^{k-1} 
\end{align*}
and state that $u\prec v$ precisely when $u' \prec v'$.

For example, for $n = 3$, here are the first forty elements according to this well-ordering:

\begin{equation*}
\begin{array}{lllll}
1. (0,0,0) & 2. (0,0,1) & 3. (0,1,0) & 4. (0,1,1) & 5. (1,0,0) \\
6. (1,0,1) & 7. (1,1,0) & 8. (1,1,1) & 9. (0,0,-1)& 10. (0,1,-1)  \\
11. (1,0,-1) & 12. (1,1,-1) & 13. (0,-1,0) & 14. (0, -1, 1) & 15. (1,-1,0) \\
16. (1,-1,1) & 17. (0,-1,-1) & 18. (1,-1,-1) & 19. (-1, 0,0) & 20.  (-1,0,1) \\
21. (-1,1,0) & 22. (-1, 1, 1) & 23. (-1,0,-1)& 24. (-1,1,-1) & 25. (-1,-1,0)  \\
26. (-1,-1,1) & 27. (-1,-1,-1) & 28. (0,0,2) & 29. (0,1,2) & 30. (1,0,2) \\
31. (1,1,2) & 32. (0,-1,2) & 33. (1,-1,2) & 34. (-1,0,2) & 35. (-1,1,2) \\
36. (-1,-1,2) & 37. (0,2,0) & 38. (0,2,1) & 39. (1,2,0) & 40. (1,2,1)
\end{array}
\end{equation*}

We use the notation $u \preceq v$ to mean that either $u = v$ or $u \prec v$.   For $a \in \Z$, we let $a^s$ denote its immediate successor in the well-ordering $\prec$.  Thus, for example, 
\begin{align*}
1^s &= -1 \\
-1^s &= 2 \\
0^s &= 1
\end{align*}

Since it will be used later, we present the following remark on how to calculate immediate successors using this well-ordering $\prec$ on $\Z^k$:

\begin{Remark}[Calculating Successors]\label{Successors} \hfill

Consider $x = (x_1, x_2, \dotsc, x_k) \in \Z^k$.  Let $m$ be the smallest entry of $x$ according to the well ordering $\prec$ of $\Z$.  Thus, 
\begin{equation*}
m = x_i \text{ for some } i
\end{equation*}
and 
\begin{equation*}
m \preceq x_i \text{ for each } i
\end{equation*}
Define
\begin{equation*}
i_m = \max\{i: x_i = m\}.
\end{equation*}
If $i_m = k$, then $x_1 = x_2 = \dotsc = x_k$ and the immediate successor of $x$ is $(0, 0, \dotsc, 0, x_k^s)$.

Otherwise, $i_m<k$.  In this case, the immediate successor to $x$ is the vector $x^s = (y_1, y_2, \dots, y_k)$ where $x^s$ has the same entries as $x$ except that all entries $x_j$ which are equal to $x_{i_m}$ with $j<i_m$ change to 0 (or stay 0 if $x_{i_m} = 0$), $x_{i_m}$ changes to $x_{i_m}^s$, and all entries $x_j$ which are equal to $x_{i_m}^s$ with $j>i_m$ change to 0.  That is,
\begin{equation*}
y_i = \begin{cases}
0 & \text{ if } i< i_m \text{ and } x_i = x_{i_m} \text{ or if } i> i_m \text{ and } x_i = x_{i_m}^s \\
x_{i_m}^s & \text{ if } i= i_m \\
x_i  & \text{ otherwise }
\end{cases}
\end{equation*}

\end{Remark}

  For a set $A \subset \Z^k $, we use the notation $\partial A$ to denote 
 \begin{equation*}
 \partial A = \{x \in \Z^k: \|x-a\|_\infty \leq 1 \text{ for some } a \in A\}.
 \end{equation*}
Thus, $A \subset \partial A$.  We note that this is the notation used in \cite{MR1141923}, \cite{MR1137765}, \cite{MR1082843}, and others.  We use $|A|$ to denote the size of a finite set.

Our goal will be to prove that, for any $A \subset \Z^k$ $|\partial A| \geq |\partial I_{|A|}|$ where $I_{|A|}$ is the initial segment in $\Z^k$ of length $|A|$, according to the well-ordering $\prec$.  In order to prove this, we will need some Lemmas about initial segments in $\Z^k$ and their boundaries.

For a number $a \in \Z$, we let $a^+$ denote the element of $\{a, a+1, a-1\}$ which is largest in the well-ordering $\prec$ and $a^-$ the element of $\{a, a+1, a-1\}$ which is the smallest.    Thus, for example, 
\begin{align*}
1^+ &= 2 & 1^- &= 0 \\
-1^+ &= -2 & -1^- &= 0 \\
0^+ &= -1 & 0^-  & = 0 \\
\end{align*}

\begin{Lemma}
If $I\subset \Z^k$ is an initial segment, so is $\partial I$
\end{Lemma}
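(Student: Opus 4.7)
The plan is to find an explicit formula for $\min_\prec \partial\{v\}$, show this minimum is weakly monotone in $v$, and deduce that $\partial I$ is an initial segment whenever $I$ is.

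I would first establish that if $u, v \in \Z^k$ satisfy $u_i \preceq v_i$ in $\Z$ for every coordinate $i$, then $u \preceq v$ in the $\Z^k$ ordering. This follows by induction on $k$: one has $M(u) \preceq M(v)$ because each $u_i \preceq v_i \preceq M(v)$; in case of equality $m := M(u) = M(v)$, note that $\{i : u_i = m\} \subseteq \{i : v_i = m\}$ (since $u_i = m$ forces $v_i \succeq m$, hence $v_i = m$), so $i_u \geq i_v$; and if $i_u = i_v$ one recurses on the reduced $(k-1)$-vectors. Writing $v^- := (v_1^-, v_2^-, \ldots, v_k^-) \in \Z^k$ using the paper's one-dimensional notation, this coordinate-wise lemma shows that $v^- \in \partial\{v\}$ and $v^- \preceq w$ for every $w \in \partial\{v\}$, so $\min_\prec \partial\{v\} = v^-$. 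Since $I$ is downward closed, $w \in \partial I$ iff $\partial\{w\} \cap I \ne \emptyset$ iff $w^- \in I$. Thus $\partial I = \{w : w^- \in I\}$, and it suffices to show that the map $v \mapsto v^-$ is weakly $\prec$-monotone.

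The main step is this monotonicity: $u \preceq v \Rightarrow u^- \preceq v^-$. I would proceed by induction on $k$ using two facts: (i) the one-dimensional map $a \mapsto a^-$ is weakly monotone (direct check: $0 \prec 1 \prec -1 \prec 2 \prec -2 \prec \cdots$ maps to $0, 0, 0, 1, -1, 2, -2, \ldots$), and (ii) $M(v^-) = M(v)^-$, which follows since each $v_i^- \preceq M(v)^-$ by (i), with equality at the coordinate realizing $M(v)$. If $M(u^-) \prec M(v^-)$ we conclude immediately. Otherwise $M(u)^- = M(v)^-$; since $a \mapsto a^-$ is injective outside $\{-1, 0, 1\}$, either $M(u) = M(v)$ or both maxes lie in $\{-1, 0, 1\}$, and in the latter case $u^- = v^- = \vec{0}$. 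In the equal-max case, writing $m = M(u) = M(v)$: for $|m| \leq 1$ again $u^- = v^- = \vec{0}$, while for $|m| \geq 2$ the equation $b^- = m^-$ forces $b = m$, so $u^-$ and $v^-$ attain their maxima at precisely the same indices $i_u$ and $i_v$ as $u$ and $v$; applying the inductive hypothesis to the reduced $(k-1)$-vectors then closes the argument.

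The main obstacle is the bookkeeping in this last step, particularly separating the collapse of $\{-1, 0, 1\}$ onto $0$ from the faithful behavior of $a \mapsto a^-$ on $|a| \geq 2$; the argument must track both the values and the positions of the maxima of $u$ and $v$ through the ${}^-$ operation.
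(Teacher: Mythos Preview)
Your argument is correct and takes a genuinely different route from the paper. The paper inducts on $|I|$: it identifies the $\prec$-largest element of $\partial I$ as $(v_1^+,\ldots,v_k^+)$ when $v$ is the last element of $I$, and then checks that adjoining the successor $u$ to $I$ adds precisely the $3^\ell$ immediate successors of $(v_1^+,\ldots,v_k^+)$ to the boundary (where $\ell$ is the number of zero entries of $u$). Your approach instead works with the dual $^-$ operation: you characterize $\partial I$ as the preimage $\{w : w^- \in I\}$ under the coordinate-wise map $w \mapsto w^-$, and reduce the claim to showing that this map is $\prec$-monotone on $\Z^k$, which you do by induction on $k$.

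Your method is cleaner in that it isolates a single structural fact (monotonicity of $w \mapsto w^-$) rather than tracking successors through Remark~\ref{Successors}. The price is that it does not directly yield the count in Lemma~\ref{count}, which the paper obtains as a byproduct and uses later in the proof of Theorem~\ref{MyTheorem}; you would need to establish that separately. Conversely, the paper's element-by-element induction makes the growth of $\partial I$ explicit at each step, which is exactly what Lemma~\ref{count} records.
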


\begin{proof}
By induction on $|I|$.  If $|I| = 1$, then $I = \{(0, 0, \dotsc, 0)\}$ and $\partial I = \{0, 1, -1\}^k$ which is the initial segment of size $3^k$.  Now let $v \in \Z^k$, and suppose that $I = \{x \in \Z^k: x \preceq v\}$ and $\partial I$ are initial segments.  Let $u$ be the successor of $v$.  Note that $u$ is not the zero vector.  We will show that $\partial \left( I \cup \{u\}\right)$ is an initial segment.  

Let $v = (v_1, v_2, \dotsc, v_k)$.  Note that the vector in $\Z^k$ which is an element of $\partial I$ and the latest in the well-ordering $\prec$ is $(v_1^+, v_2^+, \dots, v_k^+)$.  Thus, by our inductive assumption,
\begin{equation*}
\partial I = \{x \in \Z^k: x \preceq (v_1^+, v_2^+, \dots, v_k^+)\}.
\end{equation*}

Note that 
\begin{equation*}
\partial (I \cup \{u\}) = \partial I \cup \{(u_1 +\delta_1, u_2 +\delta_2, \dotsc, u_k+\delta_k): (\delta_1, \delta_2, \dots, \delta_k) \in \{0, 1, -1\}^k\}.
\end{equation*}

Let $i \in \{1, 2, \dots, k\}$ and suppose that $u_i\not=0$.  Then 
\begin{equation*}
(u_1, u_2, \dotsc, u_{i-1}, u_i^-, u_{i+1}, \dotsc, u_n) \prec (u_1, u_2, \dotsc, u_n)
\end{equation*}
and for $(\delta_1, \delta_2, \dots, \delta_{k-1}) \in \{0,1,-1\}^{k-1}$ each of 
\begin{align*}
&  (u_1 +\delta_1, u_2 + \delta_2 , \dotsc, u_{i-1}+ \delta_{i-1}, u_i^-, u_{i+1}+\delta_i, \dotsc, u_k+\delta_{k-1})\\
&  (u_1 +\delta_1, u_2 + \delta_2 , \dotsc, u_{i-1}+ \delta_{i-1}, u_i, u_{i+1}+\delta_i, \dotsc, u_k+\delta_{k-1})
\end{align*}
are adjacent to $(u_1, u_2, \dotsc, u_{i-1}, u_i^-, u_{i+1}, \dotsc, u_n)$.

Thus, the elements $(x_1, x_2, \dotsc, x_n)$ of $\partial (I \cup \{u\}) $ which are \emph{not} in $\partial I$ are precisely those of the form:
\begin{equation*}
x_i = \begin{cases} 
u_i^+ & \text{ if } u_i\not= 0 \\
0, 1, \text{ or } -1 & \text{ if } u_i = 0
\end{cases}
\end{equation*}
Note that if $u_i \not= 0$ then $u_i^+$ is at least as large as 2 in the well-ordering $\prec$ on $\Z$.  Finally, recall that $u$ is the immediate successor to $v = (v_1, v_2, \dots, v_k)$ and that every entry of $(v_1^+, v_2^+, \dots, v_k^+)$ is at least as large as $-1$ in the well-ordering on $\Z$.  Thus, by Remark \ref{Successors}, we can see that the elements of $\partial (I \cup \{u\}) $ which are \emph{not} in $\partial I$ are the $3^\ell$ immediate successors of $(v_1^+, v_2^+, \dots, v_k^+)$ where $\ell$ is the number of 0 entries of $u$.

\end{proof}

We note that we have also proved the following:

\begin{Lemma}\label{count}
Suppose $I$ is an initial segment in $\Z^k$ and let $v$ be the first element not in $I$.  Then
\begin{equation*}
|\partial (I \cup \{v\})| = |\partial I| +3^\ell 
\end{equation*}
where $\ell$ is the number of coordinates equal to 0 in $v$.
\end{Lemma}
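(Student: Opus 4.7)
The plan is to extract the count directly from the analysis already carried out in the proof of the previous lemma. Observe that $\partial(I \cup \{v\}) = \partial I \cup N(v)$, where $N(v) = \{v + \delta : \delta \in \{-1, 0, 1\}^k\}$ is the closed $\ell_\infty$-ball of radius $1$ around $v$, so the task reduces to showing $|N(v) \setminus \partial I| = 3^\ell$.

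First I would invoke the coordinate-shrinking step that appeared in the preceding proof: for every index $i$ with $v_i \neq 0$, the vector $(v_1, \ldots, v_{i-1}, v_i^-, v_{i+1}, \ldots, v_k)$ is strictly $\prec$-smaller than $v$ and therefore lies in $I$, so its closed $\ell_\infty$-neighborhood is contained in $\partial I$. That neighborhood absorbs every $w \in N(v)$ whose $i$-th coordinate is one of $v_i^- - 1$, $v_i^-$, or $v_i^- + 1$, which exhausts every possible value of $w_i$ except $w_i = v_i^+$. Hence any candidate $w \in N(v) \setminus \partial I$ is forced to satisfy $w_i = v_i^+$ at the $k - \ell$ nonzero coordinates of $v$, while at the $\ell$ zero coordinates $w_i$ is free to take any value in $\{-1, 0, 1\}$; this gives at most $3^\ell$ candidates.

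To finish, I would verify that all $3^\ell$ of these candidates genuinely fail to lie in $\partial I$. The previous lemma's proof already pinpoints the $\prec$-largest element of $\partial I$ (namely the vector of $+$-shifts of the last element of $I$) and identifies the newly added boundary points as exactly the $3^\ell$ immediate $\prec$-successors of that largest element via Remark \ref{Successors}; in particular these candidates sit strictly later in the well-ordering than every point of $\partial I$, so none of them were previously counted and the count is exact.

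The only mild obstacle is the bookkeeping that matches the candidate vectors, coordinate by coordinate, with the successor description of Remark \ref{Successors} -- one has to confirm that turning the zero coordinates of $v$ loose in $\{-1,0,1\}$ produces precisely $3^\ell$ consecutive $\prec$-successors of the largest element of $\partial I$, as opposed to a block that jumps past some element. Since the earlier proof already performs this verification to conclude that $\partial(I\cup\{v\})$ is an initial segment, the count $|\partial(I \cup \{v\})| - |\partial I| = 3^\ell$ follows with no extra work.
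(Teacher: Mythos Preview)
Your proposal is correct and mirrors the paper's approach exactly: the paper does not give Lemma~\ref{count} a separate proof at all, but simply states ``We note that we have also proved the following'' immediately after the inductive analysis in the preceding lemma, whose content you have faithfully recapitulated (the coordinate-shrinking argument for the upper bound and the identification via Remark~\ref{Successors} of the $3^\ell$ new points as consecutive $\prec$-successors for the lower bound).
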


The following technical Lemma will be used in the proof of Theorem \ref{MyTheorem}

\begin{Lemma}\label{fill}
Suppose $k \geq 2$ and consider a segment in $\Z^k$:
\begin{equation*}
S = \{v \in \Z^k: v_0 \leq v \leq v_1\}, \quad k \geq 2.
\end{equation*}
Let $v_0 = (v_{0,1}, v_{0,2}, \dots, v_{0,k})$ and suppose that $v_{0,j} = 0$ for all $j \not= j^*$ and $v_{0,j^*} \not= 0$.  Suppose that the last $t$ vectors in $S$ all do not have a 0 entry.  Then for any $j \in \{1, 2, \dots, k\}$, $j \not= j^*$ and for any
\begin{equation*}
s \in \left\{1,-1, 2, -2, \dots   (-1)^{t+1}\left\lceil  \frac{t}{2} \right\rceil \right\}
\end{equation*}
there is a vector in $S$ whose $j$th entry is $s$.
\end{Lemma}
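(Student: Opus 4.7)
The plan is to argue by induction on $t$, with Remark \ref{Successors} as the main technical tool throughout.

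For the base case $t=1$, I would show that the value $s=1$ appears as the $j$-th entry of some vector in $S$ for every $j \neq j^*$. Since $v_1$ has no zero entry but $v_0$ has zero in every coordinate other than $j^*$, there is a first vector $v^* \in S$ with $(v^*)_j \neq 0$. Its predecessor $w \in S$ then satisfies $w_j = 0$, so the $\prec$-minimum entry of $w$ is $m^{(w)} = 0$. By Remark \ref{Successors}, the transition $w \mapsto w^s = v^*$ changes the $j$-th entry away from $0$ only when $j = i_m^{(w)}$, and in that case the new value is $0^s = 1$. Hence $(v^*)_j = 1$, establishing the base case.

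For the inductive step $t \geq 2$, I would apply the inductive hypothesis to the truncated segment $S' = \{v : v_0 \preceq v \preceq u_{t-1}\}$, whose last $t-1$ vectors $u_1, \dots, u_{t-1}$ are still 0-free (they are among the last $t$ vectors of $S$). This yields the first $t-1$ nonzero values of $\prec$ as $j$-th entries of vectors in $S' \subseteq S$. It remains to exhibit the $t$-th value $s_t = (-1)^{t+1}\lceil t/2 \rceil$ at position $j$ somewhere in $S$.

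To supply $s_t$, I would study the 0-free tail $u_1 \prec u_2 \prec \cdots \prec u_t = v_1$ using Remark \ref{Successors}: whenever both $u_i$ and $u_{i+1}$ are 0-free, they differ at exactly one position $p_i = i_m^{(u_i)}$, where the entry is bumped from $m^{(u_i)}$ to $m^{(u_i),s}$. The transition from $u_1$'s predecessor (which must carry exactly one zero, at position $i_m$) into $u_1$ turns that zero into a $1$, so $m^{(u_1)} = 1$, and the sequence $m^{(u_1)} \preceq m^{(u_2)} \preceq \cdots \preceq m^{(u_t)}$ is $\prec$-weakly increasing; counting bumps through the $k$ coordinates then forces the value $s_t$ to appear as some entry of some $u_i$, completing the induction.

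The main obstacle is linking the appearance of $s_t$ to position $j$ specifically, rather than to some arbitrary coordinate. I would address this either by case analysis on whether $j$ coincides with a bump position $p_i$ in the 0-free tail, or by replacing the induction entirely with a direct construction: for each $s$ in the list, define $y_s \in \Z^k$ by $(y_s)_j = s$, $(y_s)_{j^*} = c$ where $c = v_{0,j^*}$, and $(y_s)_i = 0$ otherwise. The inequality $v_0 \prec y_s$ follows routinely from the definition of $\prec$, but the inequality $y_s \preceq v_1$ seems to require an auxiliary bound $s_t \preceq M(v_1)$ in $\prec$. This auxiliary bound in turn comes from the observation that, within any single $M$-level of the well-ordering, the length of a chain of consecutive 0-free vectors is bounded by the rank of $M$ among nonzero integers in $\prec$; establishing this chain-length bound is where I expect the most care to be required.
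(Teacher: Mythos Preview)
Your direct-construction alternative is exactly the route the paper takes: define $y_s$ by replacing the $j$-th coordinate of $v_0$ with $s$ and show $v_0 \prec y_s \preceq v_1$. However, the auxiliary bound you single out, $s_t \preceq M(v_1)$, is too weak to force $y_s \preceq v_1$. For instance, when $M(v_1)=M(v_0)=c$ and $i_{v_1}=j^*$, the comparison drops to the $(k-1)$-dimensional vectors obtained by deleting coordinate $j^*$; there one needs $s \preceq M(v_1')$, i.e.\ information about coordinates of $v_1$ \emph{other} than the maximal one, which $s_t \preceq M(v_1)$ does not supply. The paper instead proves the stronger fact that \emph{every} coordinate of $v_1$ is $\succeq s_t$ in $\prec$, and this is what makes $y_s \preceq v_1$ go through.

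That stronger bound comes from a cleaner structural observation than your chain-length-per-$M$-level idea. By Remark~\ref{Successors}, if both $u$ and $u^s$ are $0$-free then the transition changes exactly one coordinate, namely the position $i_m$ of the unique $\prec$-minimum entry, bumping it to its successor. Iterating, a run of $t$ consecutive $0$-free vectors has a \emph{single} coordinate $i$ increasing throughout, and that coordinate is the $\prec$-minimum in each vector. Since it passes through $t$ successive nonzero values, its final value $a_t$ satisfies $a_t \succeq s_t$; every other coordinate of $v_1$, being $\succeq a_t$, is then also $\succeq s_t$. This is the missing ingredient; once you have it, the induction on $t$ is unnecessary and the direct construction finishes the proof immediately.

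Your inductive approach, as you yourself note, does not by itself pin the value $s_t$ to the specific coordinate $j$; the case analysis you gesture at would essentially have to reproduce the single-coordinate observation above, so it is simpler to use that observation directly from the start.
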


\begin{proof}
Note that the numbers in the set $\left\{1,-1, 2, -2, \dots   (-1)^{t+1}\left\lceil  \frac{t}{2} \right\rceil \right\}$ are the first $t$ elements in the list $1, -1, 2, -2, 3, -3, \dots$.

Recall from Remark \ref{Successors} how immediate successors in $\Z^k$ are calculated.  The only way that $t$ successive vectors all do not have a 0 entry is if a single coordinate is increasing (according to $\prec$) in those successive vectors.  

Suppose it is the $i$th coordinate which is increasing in the final $t$ vectors of $S$.  This implies that, in \emph{each} of the $t$ successive vectors that have no 0 coordinate, the $i$th coordinate is smaller than any of the other coordinates (in the $\prec$ ordering on $\Z$).  Thus, for \emph{any} $\ell \not= i$, the $\ell$th coordinate must be at least as large as the $t$th element in the list $1, -1, 2, -2, \dots$.  Hence we can see that every coordinate of $v_1$ has a value at least as large as the $t$th element in the list $1, -1, 2, -2, 3, \dots$.  

Consider any $j \not=j^*$.   By assumption, the $j$th coordinate $v_0$ is 0.   Recall that every coordinate of $v_1$ has a value at least as large as the $t$th element in the list $1, -1, 2, -2, 3, \dots$ and $S$ is the segment between $v_0$ and $v_1$.  Thus, by the definition of the ordering $\prec$ on $\Z^k$, for the 1st through $t$th elements in the list $1, -1, 2, -2, \dots$,  there must be an element between $v_0$ and $v_1$ whose $j$th coordinate is equal to that element.

Specifically, let $v_0 = (v_{0,1}, v_{0,2}, \dots, v_{0,k})$ and $s \in \{1, -1, 2, \dots, (-1)^{t+1}\left \lceil \frac{t}{2} \right\rceil$\}.  Then the vector $x=(x_1, x_2, \dots, x_k)$ where
\begin{equation*}
x_\ell = \begin{cases}
v_{0,\ell} & \text{ if } \ell \not= j \\
s & \text{ if } \ell=j
\end{cases}
\end{equation*}
has an entry equal to $s$ and $v_0 \prec x \preceq v_1$.   Thus, we have proven the Lemma.

\end{proof}

The notation we use (and the technique used) in proving Theorem \ref{MyTheorem} is similar to that in that in the proof of Harper's Theorem in \cite{MR1141923}.  For $A \subset \Z^k$, $i \in \{1, 2, \dots, k\}$, and $j \in \Z$ we define $A_{i^j}$ to be the set of vectors in $\Z^{k-1}$ such that, when a $j$ is inserted in between the $i-1$th and $i$th entries, the resulting vector is in $A$.  That is,
\begin{equation*}
A_{i^j} = \{ x \in \Z^{k-1} : (x_1, x_2, \dotsc, x_{i-1}, j, x_i, x_{i+1}, \dotsc, x_{k-1}) \in A\}
\end{equation*}



Now we are ready to prove Theorem \ref{MyTheorem}.  

\begin{proof}[Proof of Theorem \ref{MyTheorem}]
Let $A \subset \Z^k$ be finite and nonempty.  We proceed by induction on $k$.  For $k = 1$, one can easily see that 
\begin{equation*}
|\partial A| = |A|+2
\end{equation*}
if $A$ is a segment: $A = \{j \in \Z: a \leq j \leq b \text{ for some } a, b \in \Z\}$ and
\begin{equation*}
|\partial A| > |A|+2
\end{equation*}
if $A$ is not a segment.  Since every initial segment in $\Z$ according to the well-ordering $\prec$ is a segment, the Theorem is proved for $k = 1$.

Now suppose $A \subset \Z^k$ is finite and nonempty, $k>1$, and the theorem is proven for all smaller positive $k$.  Let $i \in \{1, 2, \dots, k\}$.  Note that there are only finitely many $j \in \Z$ for which the set
\begin{equation*}
A_{i^j}
\end{equation*}
is nonempty, and that 
\begin{equation*}
\sum_{j \in \Z} |A_{i^j}| = |A|
\end{equation*}

We define a set $C_i$ which is the ``$i$-compression'' of $A$ by specifying its $k-1$-dimensional sections fixing the $i$th coordinate.  Specifically, for $j \in \Z$,  we define $(C_i)_{i^j}$ to be the initial segment in $\Z^{k-1}$ of size $|A_{i^j}|$.  This gives $|(C_i)_{i^j}| = |A_{i^j}|$ for each $j$.  Thus, since
\begin{align*}
\sum_{j\in \Z} |A_{i^j}| &= |A| \\
\text{ and} & \\
\sum_{j\in \Z} |(C_i)_{i^j}| &= |C_i|
\end{align*}
we have $|C_i| = |A|$.  Now note that
\begin{align*}
(\partial A)_{i^j} &= \partial(A_{i^j}) \cup \partial (A_{i^{j-1}}) \cup \partial (A_{i^{j+1}}) \\
(\partial C_i)_{i^j} &= \partial ((C_i)_{i^j}) \cup \partial ((C_i)_{i^{j-1}}) \cup \partial ((C_i)_{i^{j+1}})
\end{align*}
By definition, we know that $|(C_i)_{i^j}| = |A_{i^j}|$, $|(C_i)_{i^{j-1}}| = |A_{i^{j-1}}|$, and $|(C_i)_{i^{j+1}}| = |A_{i^{j+1}}|$.  Also, $(C_i)_{i^{j}}$ is an initial segment, so by induction we have $|\partial ((C_i)_{i^j})| \leq | \partial(A_{i^j})|$.  Similarly for $j+1$ and $j-1$.   

If the union above were necessarily disjoint, we would be done.  Since it's not, we note the following: all of $\partial((C_i)_{i^j}) $, $\partial ((C_i)_{i^{j-1}})$ and $\partial ((C_i)_{i^{j+1}})$ are initial segments.  Thus, they are ordered by containment.  Thus, either the inequality $|\partial ((C_i)_{i^j})| \leq | \partial(A_{i^j})|$ or one of the equalities $|\partial ((C_i)_{i^{j-1}})| \leq |\partial (A_{i^{j-1}})|$, $|\partial ((C_i)_{i^{j+1}})| \leq |\partial(A_{i^{j+1}})|$ is enough to give us that $|(\partial C_i)_{i^j}| \leq |(\partial A)_{i^j}|$.  

Note that $j \in \Z$ was arbitrary.   Thus, since  
\begin{align*}
\sum_{j\in \Z} |(\partial A)_{i^j}| &= |\partial A| \\
\sum_{j\in \Z} |(\partial C_i)_{i^j}| &= |\partial C_i|
\end{align*}
we have $|\partial C_i| \leq  |\partial A|$.

Hence, we have shown that, for $i \in \{1, 2, \dots, k\}$, the $i$th-compression of $A$ is a set of the same size with no larger boundary.  We note that a set which is compressed in every coordinate need not be an initial segment.    For example, for $k = 3$, the set 
\begin{equation*}
\{(0, 0, 0), (0, 0, 1), (0, 1, 0), (1, 0, 0)\}
\end{equation*}
is $i$-compressed, but not an initial segment.  

Let $X \subset \Z^k$ be a set of size $|A|$ which has the following properties:
\begin{enumerate}
\item $|\partial X| \leq |\partial A|$
\item $X$ is compressed in every coordinate $i \in \{1, 2, \dots, k\}$.
\item If $Z$ is any other set of size $|A|$ satisfying the above properties 1 and 2, let $\ell(X)$ and $\ell(Z)$ be the last elements of $X$ and $Z$ in the well-ordering $\prec$ respectively.  Then we must have
\begin{equation*}
\ell(X) \preceq \ell(Z)
\end{equation*}
\end{enumerate}

In words: $X$ is a set of size $|A|$ with boundary no larger than the boundary of $A$ which is compressed in every coordinate (the previous arguments show that such an $X$ exists).  We also choose $X$ such that it is ``as close as possible to an initial segment'' as described in property 3: its last element is as small as possible.  We will show that $X$ must be an initial segment by contradiction.

Suppose that $X$ is not an initial segment.  Then there exists $x \not \in X$, $ y\in X$, such that $x\prec y$.  We note that, if $x_i = y_i$ for some $i \in \{1, 2, \dots, k\}$, then defining
\begin{align*}
x^* &= (x_1, \dots, x_{i-1}, x_{i+1}, \dots, x_k) \\
y^* &= (y_1, \dots, y_{i-1}, y_{i+1}, \dots, y_k)
\end{align*}
we have $x^*\prec y^*$ in $\Z^{k-1}$, contradicting the fact that $X$ is $i$-compressed.  Thus, each of the coordinates of $x$ and $y$ must be different.  

Let $S_\alpha$ be the first segment in $X$.  That is, $S_\alpha\subset X$, $S_\alpha$ is a segment according to the ordering $\prec$, and the immediate successor to the last element in $S_\alpha$ is not in $X$.   Note that, since $X$ is compressed in every coordinate, this implies that $(0, 0, \dots, 0) \in S_\alpha$.  Let $y$ be the first element in $X \backslash S_\alpha$, and let $x$ be its immediate predecessor.  From the above comments, we know that either 
\begin{equation*}
x = (0, 1, 1, \dotsc, 1) \quad y = (1, 0, 0, \dotsc, 0)
\end{equation*}
or $x_1 \not= 0$ and
\begin{equation*}
x = (\underbrace{x_1, x_1, \dotsc, x_1}_\ell,  \underbrace{x_1^s, x_1^s, \dotsc, x_1^s}_{n-\ell}) \quad y = (\underbrace{0, 0, \dotsc, 0}_{\ell-1}, x_1^s, \underbrace{0, 0, \dotsc, 0}_{k-\ell})
\end{equation*}

\underline{Case 1:}
\begin{equation*}
x = (0, 1, 1, \dotsc, 1) \quad y = (1, 0, 0, \dotsc, 0)
\end{equation*}
Let $S_\Omega$ be the last segment in $X$.  That is, $S_\Omega =\{w \in \Z^k: u \preceq w \preceq v\}$, $S_\Omega \subset X$, $v$ is the last element in $X$, and the immediate predecessor to $u$ is not in $X$.   Since $x \not \in X$ and $u \in X$,  by the above arguments we must have
$u_1 \not= 0$ and
\begin{equation*}
u = (u_1, \underbrace{0, 0, \dotsc, 0}_{k-1})
\end{equation*}
for some $u_1 \not= 0$.  Additionally, we note that the successor to $u$ is $(u_1, 0, \dots, 0, 1)$ which must have a coordinate in common with $x$.  Thus, by the above arguments, we must have $u = v$.  We note that by Lemma \ref{count}, there are at least $3^{k-1}$ elements which adjacent to $u$ and are not adjacent to any other element in $X$.  There are $3$ elements which are adjacent to $x$ and not adjacent to any element of $S_\alpha \subset X$.  Thus, we can see that 
\begin{equation*}
\partial((X \cup\{x\})\backslash\{u\}) \leq \partial(X)-3^{k-1}+3 \leq \partial(X), \quad |(X \cup\{x\})\backslash\{u\}| = |X|
\end{equation*}
and the set $(X \cup\{x\})\backslash\{u\}$ is closer to an initial segment with respect to property 3 above.  Thus, we have reached a contradiction in Case 1.

\underline{Case 2:}
$x_1 \not= 0$ and
\begin{equation*}
x = (\underbrace{x_1, x_1, \dotsc, x_1}_\ell,  \underbrace{x_1^s, x_1^s, \dotsc, x_1^s}_{k-\ell}) \quad y = (\underbrace{0, 0, \dotsc, 0}_{\ell-1}, x_1^s, \underbrace{0, 0, \dotsc, 0}_{k-\ell})
\end{equation*}
Let $S_\Omega$ be the last segment in $X$.  That is, $S_\Omega =\{w \in \Z^k: u \preceq w \preceq v\}$, $S_\Omega \subset X$, $v$ is the last element in $X$, and the immediate predecessor to $u$ is not in $X$.

Let $S_\beta$ be the first segment not in $X$.  That is, $S_\beta = \{z \in \Z^k: q \preceq z \preceq x\}$ and the immediate predecessor of $q$ is the last element of $S_\alpha$.  Note that, by the above arguments, since $q \not\in X$ and $y \in X$ all of the entries of $q$ and $y$ must be different.  Thus, the vector $q$ is somewhere in between 
\begin{equation*}
\begin{cases}
(\underbrace{x_1,  \dotsc, x_1}_{\ell-2}, 1, x_1, \underbrace{x_1^s, \dotsc, x_1^s}_{k-\ell})  \text{ and } 
x = (\underbrace{x_1,  \dotsc, x_1}_\ell,  \underbrace{x_1^s,  \dotsc, x_1^s}_{k-\ell}) & \text{if } 1<\ell<k \\
(x_1, x_1^s, \dots, x_1^s, 1) \text{ and } x = (x_1, x_1^s, x_1^s, \dots, x_1^s) &  \text{if }  \ell=1 \\
(x_1, \dots, x_1, 1 ) \text{ and } x = (x_1, x_1, \dots, x_1) &  \text{if }  \ell=k
\end{cases}
\end{equation*}
If $q$ has no 0 entries, then we know that there is no more than $3^0=1$ element in $\Z^k$ adjacent to $q$ which is not adjacent to any other element of $S_\alpha \subset X$.  Also there are $3^t \geq 1$ elements in $\Z^k$ larger than $v$ which are adjacent to $v$ and not adjacent to any other element in $X$, where $t$ is equal to the number of 0 entries of $v$.  Thus, we have
\begin{equation*}
\partial((X \cup \{q\}\})\backslash\{v\}) \leq \partial(X)-3^{t}+1 \leq \partial(X), \quad |(X \cup\{q\}\})\backslash\{v\}| = |X|
\end{equation*}
and $(X \cup \{q\}\})\backslash\{v\}$ is closer to an initial segment with respect to property 3 above.

It is possible for $q$ to have a 0 entry.  Specifically, we could have
\begin{equation*}
q= (\underbrace{x_1,  \dotsc, x_1}_{\ell-1}, 0, \underbrace{x_1^s, \dotsc, x_1^s}_{k-\ell})  \quad  \text{if } 1<\ell<k 
\end{equation*}
or we could have 
\begin{equation*}
q= (0, x_1^s, \dots, x_1^s)  \quad  \text{if }  \ell=1\end{equation*}
If $q$ does have a 0 entry, then we exchange for $q$ instead of $v$, we exchange for the immediate successor of $q$ instead of the immediate predecessor of $v$, and so on until either we have exchanged out all of $S_\Omega$ or filled all of $S_\beta$.  This creates a new $X'$ of the same size as $X$.  Note that $q$ is the only element of $S_\beta$ which has a 0 entry.  We claim that at least one of the elements of $S_\Omega$ that we exchanged for an element of $S_\beta$ had a 0 entry.  Indeed, if all of $S_\Omega$ is exchanged, we know that $u$ has a 0 entry.  Otherwise, only the last $t$ entries of $S_\Omega$ were exchanged, where $x_1$ is the $t$th entry in the list $0, 1, -1, 2, -2, \dots$.  If none of them had a 0 entry, then Lemma \ref{fill} would imply that one of the vectors in $S_\Omega$ has an entry equal to $x_1^s$ and another vector in $S_\Omega$ has that same entry equal to $x_1$.  This contradicts the fact that no element of $S_\Omega$ can share any entries with $x$.  

Thus, at least one of the elements of $S_\Omega$ that we exchanged for an element of $S_\beta$ had a 0 entry.  Thus, by Lemma  \ref{count}, we know that the boundary of the new set $X'$ is no larger than the boundary of $X$ and that $X'$ is closer to an initial segment with respect to property 3 above.  Thus, we have reached a contradiction in Case 2.  

Hence, $X$ must indeed be an initial segment, and we have proved our Theorem.

\end{proof}

We note that the sets that we found here of minimum boundary are not unique.  For example, the sets in Figure \ref{Fig1} are both sets in $\Z^2$ of size 10 of minimal boundary, but they are not isomorphic.  The blue dots represent the set, while the red dots represent the additional vertices which are in the boundary of the set.

\begin{figure}[h]
\begin{center}
\subfigure{\label{Example1}\includegraphics[width=1.4 in]{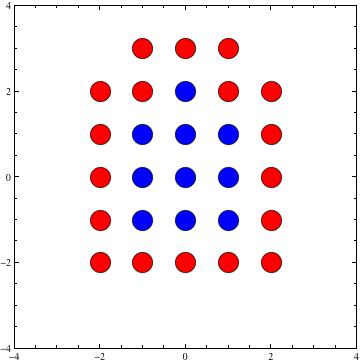}}
\hspace{.1 in} \subfigure{\label{Example2}\includegraphics[width=1.4 in]{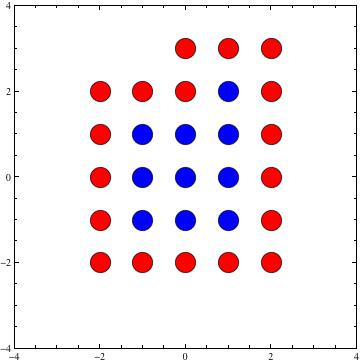}}
\end{center}
\label{Fig1}\caption{The set on the left is an initial segment, while the set on the right is not.}
\end{figure}

Finally, we note that the same arguments utilized above can also be used to prove a similar isoperimetric inequality for $\N^k$.  That is, we consider the graph whose vertex set is $\N^k$ such that there is an edge between $x\in \N^k$ and $y\in \N^k$ precisely when $\|x-y\|_\infty = 1$.  A well-ordering $\prec_\N$ is defined on $\N^k$ inductively just as it is for $\Z^k$, but with the base case of $\N^1$ being the standard ordering of $\N$:
\begin{equation*}
0<1<2<3< \cdots
\end{equation*}

All of the Lemmas and the Theorem are proven similarly.  In the case of $\N^k$, the equation of Lemma \ref{count} would instead conclude that
\begin{equation*}
|\partial (I \cup \{v\})| = |\partial I| +2^\ell 
\end{equation*}
where $\ell$ is the number of coordinates equal to 0 in $v$.  With this well-ordering $\prec_\N$, we have the following:
\begin{Corollary}\label{TheoremN}
Let $I$ be an initial segment in $\N^k$, and $A$ a finite, nonempty subset.  If $|I| = |A|$ then $|\partial I| \leq |\partial A|$.
\end{Corollary}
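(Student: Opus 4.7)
The plan is to mirror the entire development from Remark \ref{Successors} through Theorem \ref{MyTheorem}, making the simplifications appropriate to the base well-ordering $0 < 1 < 2 < 3 < \cdots$ on $\N$. Since the successor function in $\N$ is simply $a^s = a + 1$, the symbols $a^+$ and $a^-$ from the $\Z$ setting become $a^+ = a + 1$ and $a^- = \max(a-1, 0)$; this reflects the fact that the value $0$ in a coordinate has only two neighbors in $\N$ (namely $0$ and $1$) rather than three. The analogue of Remark \ref{Successors} for $\N^k$ follows the same inductive description of successors, with the simpler successor rule on $\N$.

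First I would prove the $\N^k$ analogue of the lemma stating that $\partial I$ is an initial segment whenever $I$ is, by the same induction on $|I|$. The only modification is that when the newly added vector $u$ has a zero coordinate, only two values (namely $0$ and $1$) in that coordinate contribute to new boundary elements, rather than three values in $\Z$. This immediately yields the modified count $|\partial(I \cup \{v\})| = |\partial I| + 2^\ell$, where $\ell$ is the number of zero coordinates of $v$. Next I would establish the $\N^k$ analogue of Lemma \ref{fill}: the sequence of values that appear as first-time nonzero coordinates under $\prec_\N$ is $1, 2, 3, \ldots$ in place of $1, -1, 2, -2, \ldots$, so the relevant set of intermediate values becomes $\{1, 2, \ldots, t\}$. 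The argument is otherwise identical: $t$ successive zero-free vectors in a segment must arise by incrementing a single coordinate, forcing every other coordinate of the terminal vector to be at least $t$, and hence producing any requested intermediate value in a lower coordinate.

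With these preparatory lemmas in hand, the main induction on $k$ follows the compression argument of Theorem \ref{MyTheorem}. The base case $k = 1$ now gives $|\partial I| = |I| + 1$ for the initial segment $I = \{0, 1, \ldots, n-1\}$, which is minimal since any other finite subset of $\N$ of the same size is either a shifted interval (yielding $|A| + 2$) or has a gap (yielding strictly more). For the inductive step, I would define the $i$-compression $C_i$ by replacing each slice $A_{i^j}$ with the initial segment of the same size in $\N^{k-1}$, observe $|\partial C_i| \leq |\partial A|$ via the inductive hypothesis and the total containment ordering of $(k-1)$-dimensional initial segments, and then select an extremal $X$ of the same size as $A$, compressed in every coordinate, whose last element $\ell(X)$ in $\prec_\N$ is minimal among such sets.

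The main obstacle will be verifying that the transition-pair analysis closes in the $\N^k$ setting. Assuming $X$ is not an initial segment, the first element $y \in X$ outside the initial-segment prefix $S_\alpha$ and its immediate predecessor $x \notin X$ must share no coordinate values (by $i$-compression), forcing $(x, y)$ into one of the two shapes from the original proof, with $x_1^s$ replaced by $x_1 + 1$ throughout. In Case 1, the analogous swap yields $|\partial X'| \leq |\partial X| - 2^{k-1} + 2$, still a strict decrease for $k \geq 2$ since the last element $u = (u_1, 0, \ldots, 0)$ contributes $2^{k-1}$ unshared boundary elements by the modified count lemma, while $x = (0, 1, \ldots, 1)$ adds only $2$ new ones (reflecting that its first coordinate is $0$ with just two possible neighbors in $\N$). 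Case 2 applies the $\N^k$ analogue of the ``jostling'' argument, noting that if no exchanged vector in the last segment $S_\Omega$ had a zero coordinate, the $\N^k$-version of Lemma \ref{fill} would force some coordinate in $S_\Omega$ to take both the values $x_1$ and $x_1 + 1$, contradicting disjointness of coordinate values with $x$. In each case the resulting set $X'$ has $|\partial X'| \leq |\partial X|$ and strictly smaller last element in $\prec_\N$, contradicting the choice of $X$ and forcing $X$ to be an initial segment.
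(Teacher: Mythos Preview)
Your proposal is correct and follows essentially the same approach as the paper, which simply remarks that ``the same arguments utilized above can also be used'' with the single explicit modification that the count in Lemma~\ref{count} becomes $2^\ell$ rather than $3^\ell$; you have carried out this adaptation in detail. One minor slip: in Case~1 you write ``still a strict decrease for $k \geq 2$,'' but $-2^{k-1}+2 = 0$ when $k=2$; the contradiction, as you correctly note at the end, comes from the last element of $X'$ being strictly smaller in $\prec_\N$ (property~3), not from a strict boundary decrease.
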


\section{Boundary Computations}\label{Boundary Computations}

We define a graph on the vertex set $\Z^k \times \N^d$, where $k$ and $d$ are non-negative integers (not both 0).  We use $\N$ to denote the set of non-negative integers:
\begin{equation*}
\N = \{x \in \Z: x \geq 0\}
\end{equation*}
As usual, we say that two vertices $x, y \in \Z^k \times \N^d$ are joined by an edge precisely when their $\ell_\infty$-distance is 1.  That is, when
\begin{equation*}
\|x-y\|_\infty = \max_{i=1, 2, \dots, k+d} \{|x_i-y_i|\}=1.
\end{equation*}

For simplicity, we introduce the following notation: for a real-valued vector $p = (p_1, p_2, \dots, p_n) \in \R^n$, and $x \in \R$, we define
\begin{equation*}
(p, x \rightarrow i) = (p_1, p_2, \dots, p_{i-1}, x, p_i, p_{i+1}, \dots, p_n) \in \R^{n+1}
\end{equation*}
In words, $(p, x \rightarrow i)$ is the vector that results when placing $x$ in the $i$th coordinate of $p$ and shifting the $i$th through $n$th coordinates of $p$ to the right.  

There are two types of 1-dimensional compression which we will utilize.  
\begin{Definition}
We say that a set $S \subset \Z^k \times \N^{d}$ is \emph{centrally compressed} in the $i$-th coordinate ($1 \leq i \leq k$) with respect to $p \in \Z^{k-1}\times \N^{d}$ if the set
\begin{equation*}
\{x \in \Z: (p, x \rightarrow i) \in S\}
\end{equation*}
is either empty or of one of the following two forms:
\begin{align*}
\{x: -a  \leq x &\leq a \text{ for } a \in \N\} \\
& \text{OR} \\
\{x: -a \leq x &\leq a+1 \text{ for } a \in \N\}
\end{align*}

\end{Definition}

\begin{Definition}
We say that $S \subset \Z^k \times \N^{d}$ is \emph{downward compressed} in the $j$-th coordinate $(k+1 \leq  j \leq k+d)$ with respect to $p \in \Z^{k} \times \N^{d-1}$ if the set
\begin{equation*}
\{x \in \N: (p, x \rightarrow j) \in S\}
\end{equation*}
is either empty or of the form:
\begin{equation*}
\{x: 0 \leq x \leq a \text{ for } a \in \N\}
\end{equation*}
\end{Definition}

For certain subsets $S \subset \Z^k \times \N^d$, we will calculate the boundary 
\begin{align*}
\partial S &= \{x \in \Z^k \times \N^d: \|y-x\|_\infty \leq 1 \text{ for some } y \in S\} \\
&= \left\{x \in \Z^k \times \N^{d}:\exists \; s \in S \text{ such that }x = s+\epsilon \text{ for some } \epsilon \in \{-1,0,1\}^{k+d}\right\}
\end{align*}
Specifically, we will calculate the boundary of sets which are centrally compressed in the $i$th coordinate with respect to any $p \in \Z^{k-1} \times \N^{d}$   for $1 \leq i \leq k$, and which are downward compressed in the $j$th coordinates with respect to any $p \in  \Z^k \times \N^{d-1}$ for $k+1 \leq j \leq k+ d$.  We will say that such sets are ``compressed in every coordinate.''  Before completing the calculations, we shall see that a set of minimum boundary must be compressed in every coordinate.

\subsection{Effect of Compression}

\begin{Definition}
Let $S \subset \Z^k \times \N^{d}$.  For $1 \leq i \leq k$, we define $S_i$ to be the $i$th central compression of $S$  by specifying its 1-dimensional sections in the $i$th coordinate.  Specifically, 
\begin{enumerate}
\item For each $p \in \Z^{k-1} \times \N^{d}$, 
\begin{equation*}
|\{x \in \Z: (p, x \rightarrow i) \in S_i\}| 
= |\{x \in \Z: (p, x \rightarrow i) \in S\}| 
\end{equation*}
\item $S_i$ is centrally compressed in the $i$th coordinate with respect to $p$ for each $p \in \Z^{k-1} \times \N^{d}$.
\end{enumerate}
For $k+1 \leq j \leq k+d$, we define $S_j$ to be the $j$th central compression of $S$  by specifying its 1-dimensional sections in the $j$th coordinate.  Specifically, 
\begin{enumerate}
\item For each $p\in \Z^{k} \times \N^{d-1}$, 
\begin{equation*}
|\{x \in \N: (p, x \rightarrow j) \in S_j\}| 
= |\{x \in \N: (p, x \rightarrow j) \in S\}| 
\end{equation*}
\item $S_j$ is downward compressed in the $j$th coordinate with respect to $p$ for each $p \in \Z^{k} \times \N^{d-1}$.
\end{enumerate}

\end{Definition}

In words: after fixing a coordinate $i \in \{1, 2, \dots, k\}$, we consider all lines in $\Z^k \times \N^d$ where only the $i$th coordinate varies, and we intersect those lines with $S$.  Eeach of the points in those intersections are moved along the line so that they are a segment centered around 0.  The result is $S_i$.  Similarly, after choosing $j \in \{k+1, \dots, k+d\}$, we again consider lines in $\Z^k \times \N^d$ where the $j$th coordinate varies, and intersect those lines with $S$.  We now move the points along the line so that they are a segment starting from 0.  The result is $S_j$.

\begin{Proposition}\label{Compression1}
Suppose that $S \subset \Z^k \times \N^{d}$.    For $1 \leq i \leq k$, let $S_i$ be the $i$th central compression of $S$.  Then 
\begin{equation*}
\partial S_i \leq \partial S
\end{equation*}
Similarly, for $k+1 \leq j \leq k+d$, let $S_j$ be the $j$th downward compression of $S$.  Then
\begin{equation*}
\partial S_j \leq \partial S
\end{equation*}

\end{Proposition}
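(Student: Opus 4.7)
The plan is to fix the coordinate being compressed, decompose the boundary as a sum over slices perpendicular to that coordinate, and establish the inequality slicewise. Take first the case $i \in \{1,\dots,k\}$. For each $q \in \Z^{k-1}\times\N^d$ let $T_q = \{x \in \Z : (q, x \to i) \in S\}$ and $T_q^* = \{x \in \Z : (q, x \to i) \in S_i\}$; by construction $|T_q^*| = |T_q|$ and $T_q^*$ has the form $\{-a,\dots,a\}$ or $\{-a,\dots,a+1\}$. Writing $N(U) = U \cup (U+1) \cup (U-1)$ for the $1$-dimensional neighborhood operator on $\Z$, unpacking the definition of $\partial$ gives
$$\{y \in \Z : (p, y \to i) \in \partial S\} \;=\; \bigcup_{q \in \mathcal{N}(p)} N(T_q),$$
where $\mathcal{N}(p) = \{q \in \Z^{k-1}\times\N^d : \|q-p\|_\infty \leq 1\}$, and the same formula holds for $\partial S_i$ with $T_q^*$ in place of $T_q$. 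Since $N$ commutes with unions, it suffices to prove the slicewise bound
$$\Bigl|N\Bigl(\bigcup_{q \in \mathcal{N}(p)} T_q^*\Bigr)\Bigr| \;\leq\; \Bigl|N\Bigl(\bigcup_{q \in \mathcal{N}(p)} T_q\Bigr)\Bigr|$$
for every $p \in \Z^{k-1}\times\N^d$, and then sum over $p$.

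The key observation is that any two sets of the form $\{-a,\dots,a\}$ or $\{-a,\dots,a+1\}$ are nested by size, so $\bigcup_q T_q^*$ is itself such a centered segment, of size $M := \max_q |T_q^*| = \max_q |T_q|$. A direct check gives $|N(U)| = |U|+2$ for any such centered segment $U$, so the left-hand side of the slicewise bound equals $M+2$ (and equals $0$ when $M = 0$). For the right-hand side, for any nonempty $U \subseteq \Z$ one has $|N(U)| \geq |U| + 2$ because $\min U - 1$ and $\max U + 1$ lie in $N(U)\setminus U$; combined with $|\bigcup_q T_q| \geq \max_q |T_q| = M$, this yields $|N(\bigcup_q T_q)| \geq M + 2$ whenever $M > 0$, with both sides equal to $0$ when $M = 0$.

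The downward case $j \in \{k+1,\dots,k+d\}$ is handled identically, replacing $\Z$ by $\N$ and $N$ by $N_\N(U) = (U \cup (U+1) \cup (U-1)) \cap \N$. The downward-compressed columns $\{0,\dots,a\}$ are nested by size, so the compressed union is again of this form with size $M$, and $|N_\N|$ of such a segment equals its size plus $1$. For any nonempty $U \subseteq \N$, $|N_\N(U)| \geq |U| + 1$ since $\max U + 1$ always lies in $N_\N(U) \setminus U$, and the same combining step gives the slicewise bound; summing over $p \in \Z^k\times\N^{d-1}$ completes the argument. There is no real obstacle here: once the slicewise reduction and the identity $N(\bigcup_q T_q) = \bigcup_q N(T_q)$ are in place, the argument is pure bookkeeping. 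The only point requiring mild care is checking that centered (respectively downward) segments are totally ordered by inclusion so that the compressed union has size equal to the maximum individual column size.
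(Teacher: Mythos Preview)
Your proof is correct and follows essentially the same strategy as the paper's: decompose $\partial S$ and $\partial S_i$ into one-dimensional slices perpendicular to the compressed coordinate, express each slice as a union over neighboring columns, use that centered (respectively downward) segments are totally ordered by inclusion so the compressed union has size $\max_q |T_q|$, and apply the elementary bound $|N(U)| \geq |U| + 2$ (respectively $|N_\N(U)| \geq |U| + 1$). Your notation with $T_q$, $\mathcal{N}(p)$, and the operator $N$ is a bit cleaner than the paper's, but the underlying argument is the same.
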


\begin{proof}
Let $1 \leq i \leq k$ and fix $p \in \Z^{k-1} \times \N^{d}$.  Consider any $\epsilon \in \{-1,0,1\}^{k+d-1}$ for which
\begin{equation*}
p+\epsilon \in \Z^{k-1} \times \N^{d}.
\end{equation*}  
Let
\begin{equation*}
I = \{\epsilon \in \{-1,0,1\}^{k+d-1}: \exists \; x \in \Z: (p+\epsilon, x \rightarrow i)  \in S\}.
\end{equation*}
Note that, by the definition of $S_i$, for $p \in \Z^{k-1} \times \N^d$, we have
\begin{equation*}
\{x \in \Z: (p+\epsilon, x \rightarrow i) \in S\} \not= \emptyset
\end{equation*}
if and only if
\begin{equation*}
\{x \in \Z: (p+\epsilon, x \rightarrow i) \in S_i\} \not= \emptyset
\end{equation*}
We have
\begin{multline*}
\{(p, x \rightarrow i):  x \in \Z\}\cap \partial S \\
= \bigcup_{\epsilon \in I}\left\{(p,x \rightarrow i): \exists \; y \in \{x-1, x, x+1\}    \text{such that } (p+\epsilon, y \rightarrow i) \in S\right\}
\end{multline*}
Thus,  we have
\begin{align*}
&|\{(p, x \rightarrow i):  x \in \Z\}\cap \partial S |\\
&\geq \max_{\epsilon \in I}|\left\{(p,x \rightarrow i): \exists \; y \in \{x-1, x, x+1\}    \text{such that } (p+\epsilon, y \rightarrow i) \in S\right\}| \\
&\geq \max_{\epsilon \in I} |\{ x \in \Z: (p+\epsilon, x \rightarrow i) \in S\}| + 2
\end{align*}

where the last inequality is achieved if and only if, for the $\epsilon$ achieving the maximum,
\begin{equation*}
\{x \in \Z: (p + \epsilon, x \rightarrow i) \in S\} = \{x \in \Z: a \leq x \leq b\}
\end{equation*}
for some $a, b \in \Z$.

Additionally, we note that
\begin{multline*}
\{(p, x \rightarrow i):  x \in \Z\}\cap \partial S_i \\
= \bigcup_{\epsilon \in I}\left\{(p, x \rightarrow i): \exists \;y\in \{x-1,x, x+1\} \text{ such that } (p+\epsilon, y \rightarrow i) \in S_i \right\}. 
\end{multline*}
so that now, by the definition of $S_i$, 
\begin{align*}
&|\{(p, x \rightarrow i):  x \in \Z\}\cap \partial S_i  |\\
&= \max_{\epsilon \in I}|\left\{(p, x \rightarrow i): \exists \;y\in \{x-1,x, x+1\} \text{ such that } (p+\epsilon, y \rightarrow i) \in S_i \right\}| \\
&= \max_{\epsilon \in I} |\{ x \in \Z: (p+\epsilon, x \rightarrow i) \in S_i\}| + 2
\end{align*}
By the definition of $S_i$, for each $\epsilon \in I$, we have
\begin{equation*}
 |\{ x \in \Z: (p+\epsilon, x \rightarrow i) \in S\}|  = |\{ x \in \Z: (p+\epsilon, x \rightarrow i) \in S_i\}|.
\end{equation*}
Finally, we note that both $\partial S$ and $\partial S_i$ are the disjoint unions of those 1-dimensional sections:
\begin{align*}
\partial S &= \bigcup_{p \in \Z^{k-1}\times \N^{d}} \{(p, x \rightarrow i):  x \in \Z\}\cap \partial S \\
\partial S_i &= \bigcup_{p \in \Z^{k-1}\times \N^{d}}\{(p, x \rightarrow i):  x \in \Z\}\cap \partial S_i
\end{align*}
Thus, we can conclude that
\begin{equation*}
\partial S \geq \partial S_i.
\end{equation*}

The proof in the case of downward compression in coordinate $j \in \{k+1, \dots, k+d\}$  is similar.
\end{proof}

\subsection{Boundaries of Compressed Subsets of $\Z^k \times \N^{d}$}\label{BoundaryZN}

We have the following definitions:  For fixed $k, d \in \N$ (not both 0), let
\begin{align*}
K &= \{1, 2, \dots, k\} \\
D &= \{k+1, k+2, \dots, k+d\}
\end{align*}

In addition, for $I \subset \{1, 2, \dots, k+d\}$ and $S \subset \Z^k \times \N^d$, let $P_I(S)$ denote the $k+d-|I|$-dimensional projection of $S$ which results from deletion of the coordinates in $I$.

\begin{Theorem}\label{ZNBoundary}
Let $S \subset \Z^k \times \N^{d}$ be a finite set which is centrally compressed in coordinate $i$ for each $1 \leq i \leq k$ and downward compressed in each coordinate $j$ where $k+1\leq j \leq k+d$.  Then 
\begin{equation*}
\partial S = \sum_{I \subset \{1, 2, \dots, k+d\}} 2^{|I\cap K|}|P_I(S)|.
\end{equation*}

\end{Theorem}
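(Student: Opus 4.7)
The plan is to prove the formula by induction on $n = k + d$. The base case $n = 1$ is a direct verification: when $k = 1, d = 0$ the set $S$ is a centered integer interval with $|\partial S| = |S| + 2$, and when $k = 0, d = 1$ we have $S = \{0, 1, \dots, a\}$ with $|\partial S| = |S| + 1$; in each case the right-hand side collapses correctly using $|P_\emptyset(S)| = |S|$ and $|P_{\{1\}}(S)| = 1$.

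For the inductive step I would peel off the last coordinate $j = k+d$ and slice $S$ along it. In the easier case $j \in D$, set $T_x = \{p \in \Z^k \times \N^{d-1} : (p, x) \in S\}$; downward compression in coordinate $j$ forces the nesting $T_0 \supseteq T_1 \supseteq T_2 \supseteq \cdots$, and each $T_x$ inherits compression in the remaining coordinates so the inductive hypothesis applies. The slice of $\partial S$ at $j$-th coordinate $x_0$ is the $(\Z^k \times \N^{d-1})$-boundary of $T_{x_0 - 1} \cup T_{x_0} \cup T_{x_0 + 1}$, which by the nesting equals $\partial T_{\max(x_0 - 1, 0)}$; summing over $x_0 \geq 0$ yields
\begin{equation*}
|\partial S| = |\partial T_0| + \sum_{x \geq 0} |\partial T_x|.
\end{equation*}
When $j = k \in K$, central compression instead yields the $\prec$-indexed chain $T_0 \supseteq T_1 \supseteq T_{-1} \supseteq T_2 \supseteq T_{-2} \supseteq \cdots$; the three slice contributions at $x_0 \in \{-1, 0, 1\}$ all reduce to $\partial T_0$, while for $x_0 \geq 2$ the contribution is $\partial T_{x_0 - 1}$ and for $x_0 \leq -2$ it is $\partial T_{x_0 + 1}$, producing
\begin{equation*}
|\partial S| = 2|\partial T_0| + \sum_{x \in \Z} |\partial T_x|.
\end{equation*}

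To finish, I would expand each $|\partial T_x|$ by the inductive formula and regroup the resulting double sum by subsets $I \subset \{1, \dots, k+d\}$. The bookkeeping rests on the identities $|P_I(S)| = \sum_x |P_I(T_x)|$ when $j \notin I$, and $|P_I(S)| = |P_{I \setminus \{j\}}(T_0)|$ when $j \in I$ (the latter because $T_0$ is the largest slice, so $P_{\{j\}}(S) = T_0$). In the $D$-case the single extra copy of $|\partial T_0|$ supplies exactly the $j \in I$ summands, whose weight $2^{|I \cap K|}$ is unchanged by adding $j$; in the $K$-case the doubled extra copy $2|\partial T_0|$ matches the factor-of-two jump in $2^{|I \cap K|}$ that occurs when $k$ is included in $I$.

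I expect the main obstacle to be the $K$-coordinate peel: verifying the $\prec$-nesting chain by unpacking the two possible forms $\{-a, \dots, a\}$ and $\{-a, \dots, a+1\}$ of a centrally compressed slice, and then matching the coefficient $3|\partial T_0|$ arising from $x_0 \in \{-1, 0, 1\}$ against the formula so that it contributes the needed $2|\partial T_0|$ beyond one ``regular'' copy of $|\partial T_0|$. The $D$-peel is strictly easier because its nesting is in the natural order and the weighting does not change.
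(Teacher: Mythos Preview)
Your plan is correct and would produce a valid proof, but it is a genuinely different argument from the one in the paper. The paper inducts on $|S|$: it locates a \emph{corner point} $z$ of $S$ (extreme in every coordinate), deletes it, applies the inductive hypothesis to $S\setminus\{z\}$, and then checks that both sides of the identity jump by exactly $3^{|Z_K|}2^{|Z_D|}$, where $Z_K,Z_D$ record the zero coordinates of $z$. Your proof instead inducts on the dimension $k+d$, peels off a coordinate, and exploits the \emph{nesting} of the slices $T_x$ forced by compression to reduce each boundary slice to a single $\partial T_y$; the extra one (respectively two) copies of $|\partial T_0|$ then account exactly for the summands with $j\in I$.

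The two approaches illuminate different structure. The paper's point-deletion argument is local and makes the role of zero coordinates transparent, mirroring Lemma~\ref{count}. Your slicing argument is closer in spirit to the compression step in the proof of Theorem~\ref{MyTheorem}: the key input is that compressed slices form a chain under inclusion, so unions of adjacent slices collapse. Your approach also makes the projection identities $|P_I(S)|=\sum_x|P_I(T_x)|$ for $j\notin I$ and $P_I(S)=P_{I\setminus\{j\}}(T_0)$ for $j\in I$ do the combinatorial work directly, avoiding the corner-point bookkeeping entirely. One small point worth making explicit when you write it up: the $\prec$-chain $T_0\supseteq T_1\supseteq T_{-1}\supseteq T_2\supseteq\cdots$ does follow from the two allowed shapes of a centrally compressed section, but you should spell out, for instance, why $T_2\subseteq T_{-1}$ (if the section contains $2$ it is $\{-a,\dots,a\}$ with $a\ge 2$ or $\{-a,\dots,a+1\}$ with $a\ge 1$, and both contain $-1$).
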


\begin{proof}
We proceed by induction on $|S|$.  If $|S| =1$, then $S = \{(0,0,0,\dots,0)\}$ and 
\begin{multline*}
\partial S = \{(x_1,x_2, \dots, x_{k+d}): x_i \in \{-1,0,1\}  \text{ for } 1 \leq i \leq k, \\
x_j \in \{0,1\} \text{ for } k+1\leq j \leq k+d\}
\end{multline*}
so that
\begin{equation*}
|\partial S| = 3^k2^{d}
\end{equation*}
We note that in this case, $|P_I(S)| = 1$ for any $I \subset \{1, 2, \dots, k+d\}$ so that
\begin{align*}
\sum_{I \subset \{1, 2, \dots, k+d\}} 2^{|I\cap K|}|P_I(S)| &= \sum_{I \subset \{1, 2, \dots, k+d\}} 2^{|I\cap K|} \\
&=  \sum_{i=0}^{d}\sum_{j=0}^k {k \choose j}{d \choose i} 2^j \\
&= 3^k2^{d}
\end{align*}

Now suppose that $|S|>1$.  We consider the same well-ordering $\prec$ on $\Z$ as considered in Section \ref{SectionMyTheorem}:
\begin{equation*}
0 \prec 1  \prec  -1  \prec  2  \prec  -2  \prec  3 \cdots
\end{equation*}
and we denote by $<$ the regular ordering on either $\Z$ or $\N$.  As in Section \ref{SectionMyTheorem}, for $a \in \Z$,  we let $a^+$ denote the element of $\{a, a+1, a-1\}$ which is largest in the well-ordering $\prec$ and $a^-$ the element of $\{a, a+1, a-1\}$ which is the smallest.  

Let $z = (z_1, z_2, \dots, z_{k+d}) \in S$ be a point which is a ``corner point'' of $S$.  That is, for any $i \in \{1, 2, \dots, k\}$, the point
\begin{equation*}
(z_1, \dots, z_{i-1}, z_i^+, z_{i+1}, \dots, z_{k+d}) 
\end{equation*}
is not in $S$ and for any $j \in \{k+1, k_2, \dots, k+d\}$, the point
\begin{equation*}
(z_1, \dots, z_{j-1}, z_j+1, z_{j+1}, \dots, z_{k+d}) 
\end{equation*}
is not in $S$.  We note that $z$ exists because $S$ is finite.  Also note that, since $S$ is compressed in every coordinate, for any $i \in \{1, 2, \dots, k\}$ where $z_i \not= 0$, 
\begin{equation*}
(z_1, \dots, z_{i-1}, z_i^-, z_{i+1}, \dots, z_{k+d})  \in S
\end{equation*}
and 
\begin{equation*}
(z_1, \dots, z_{i-1}, z_i^-, z_{i+1}, \dots, z_{k+d})   \not= (z_1, \dots, z_{i-1}, z_i, z_{i+1}, \dots, z_{k+d})  
\end{equation*}
Additionally, for any $j \in \{k+1, k+2, \dots, d+k\}$ where $z_j \not= 0$, 
\begin{equation*}
(z_1, \dots, z_{j-1}, z_j-1, z_{j+1}, \dots, z_{k+d})  \in S
\end{equation*}
and 
\begin{equation*}
(z_1, \dots, z_{j-1}, z_j-1, z_{j+1}, \dots, z_{k+d})   \not= (z_1, \dots, z_{j-1}, z_j, z_{j+1}, \dots, z_{k+d})  
\end{equation*}

By induction,
\begin{equation*}
\partial \left(S\backslash\{z\}\right) = \sum_{I \subset \{1, 2, \dots, k+d\}} 2^{|I\cap K|} |P_I(S\backslash\{z\})|.
\end{equation*}

Define
\begin{equation*}
Z_K = \{i \in \{1, 2, \dots, k\}: z_i = 0\}
\end{equation*}
and
\begin{equation*}
Z_D= \{j \in \{k+1, k+2, \dots, k+d\}: z_j = 0\}
\end{equation*}
Then we note that the only points $(y_1, y_2, \dots, y_{k+d})$ which are in $\partial S$ but not in $\partial \left(S\backslash\{z\}\right)$ are of the form
\begin{equation*}
y_i = \begin{cases}
-1,0, \text{ or } 1 & \text{ if } i \in Z_K \\
y_i^+& \text{ if } i \in \{1, 2, \dots, k\} \backslash Z_K \\
0 \text{ or } 1 & \text{ if } i \in Z_D \\
y_i+1 & \text{ if } i \in \{k+1, k+2, \dots, k+d\} \backslash Z_D \\
\end{cases}
\end{equation*}
Thus, we can see that there are a total of 
\begin{equation*}
3^{|Z_K|}2^{|Z_D|}
\end{equation*}
points in $\partial S$ which are not in $\partial \left(S\backslash\{z\}\right)$ so that
\begin{equation*}
|\partial S| = |\partial S \backslash\{z\}|+3^{|Z_K|}2^{|Z_D|}
\end{equation*}

We also note that since $S$ is compressed in every coordinate, the only $I \subset \{1, 2, \dots, k+d\}$ for which $|P_I(S)|$ is larger than $|P_I(S \backslash\{z\})|$ must be of the form $I \subset Z_K \cup Z_D$.  If $I\subset Z_K \cup Z_D$, then $|P_I(S)| = |P_I(S \backslash\{z\})|+1$ and if $I \not\subset Z_K \cup Z_D$, then $|P_I(S)| = |P_I(S \backslash\{z\})|$.  Thus, we see that
\begin{align*}
 \sum_{I \subset \{1, 2, \dots, k+d\}} 2^{|I\cap K|} |P_I(S)| 
 &=  \sum_{I \subset \{1, 2, \dots, k+d\}} 2^{|I\cap K|} |P_I(S\backslash\{z\})|+ \sum_{I \subset Z_K \cup Z_D} 2^{|I\backslash Z_D|} \\
 &=  \partial \left(S\backslash\{z\}\right)+ \sum_{j=0}^{|Z_K|}\sum_{i=0}^{|Z_D|}{|Z_D| \choose i}{|Z_K| \choose j}2^{j} \\
 &=\partial \left(S\backslash\{z\}\right)+3^{|Z_K|}2^{|Z_D|}
 \end{align*}
 and hence our Theorem is proven.

\end{proof}

We note that, in light of Theorem \ref{MyTheorem} and Theorem \ref{ZNBoundary}, we have the following Corollary:

\begin{Corollary}
Let $X_n$ denote the set of all subsets $X \subset \Z^k$ of size $n$.  Define the function $f:X_n \to \R$ as follows: for $X \in X_n$,
\begin{equation*}
f(X) = \sum_{I \subset \{1, 2, \dots, k\}} 2^{|I|}|P_I(X)|.
\end{equation*}
The minimum value of this function is achieved at an initial segment of $\Z^k$ of size $n$, according to the well-ordering $\prec$

\end{Corollary}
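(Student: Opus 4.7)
The plan is to prove $f(X) \geq f(I_n)$ for every $X \subset \Z^k$ of size $n$, combining the central-compression machinery of Section \ref{Boundary Computations} with the isoperimetric inequality of Theorem \ref{MyTheorem}.

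First I would establish an analogue of Proposition \ref{Compression1} for $f$ itself: if $X_i$ denotes the central $i$-compression of $X$, then $f(X_i) \leq f(X)$. The point is that central compression in coordinate $i$ leaves $|P_I(X)|$ unchanged whenever $i \in I$ (the projection factors through deletion of coordinate $i$, and the set of $(k-1)$-tuples with nonempty $i$-fiber is preserved by compression) and weakly decreases $|P_I(X)|$ whenever $i \notin I$. For the latter, I would use that centered intervals around $0$ in $\Z$ are totally ordered by inclusion, so the union of the $i$-fiber intervals of $X_i$ lying above any fixed projection to the remaining coordinates is itself a centered interval whose size equals the largest fiber size, and this maximum is at most the cardinality of the corresponding union in $X$. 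Summing against the weights $2^{|I|}$ gives $f(X_i) \leq f(X)$. Iterating such compressions terminates --- e.g.\ using the monovariant $\sum_{x\in X}\|x\|^2$, which strictly decreases at every non-trivial step --- producing $X^* \subset \Z^k$ of size $n$, compressed in every coordinate, with $f(X^*) \leq f(X)$ and (by Proposition \ref{Compression1}) $|\partial X^*| \leq |\partial X|$. Theorem \ref{ZNBoundary} with $d=0$ then gives $f(X^*) = |\partial X^*|$, while Theorem \ref{MyTheorem} gives $|\partial X^*| \geq |\partial I_n|$; combining, $f(X) \geq |\partial I_n|$.

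It remains to show $f(I_n) = |\partial I_n|$, which I would prove by induction on $n$ by tracking both sides as we adjoin the $\prec$-successor $v$ of the last element of $I_n$. Lemma \ref{count} already gives $|\partial I_{n+1}| - |\partial I_n| = 3^\ell$, where $\ell = |Z_v|$ and $Z_v = \{j : v_j = 0\}$. On the other hand, $f(I_{n+1}) - f(I_n) = \sum 2^{|I|}$ summed over those $I \subset \{1,2,\dots,k\}$ such that no element of $I_n$ agrees with $v$ on the coordinates outside $I$; so it suffices to show that this set of $I$'s is exactly $\{I : I \subset Z_v\}$, whose weighted sum is $(1+2)^{|Z_v|} = 3^\ell$. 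For the forward direction, if some $i \in I$ has $v_i \neq 0$, then the vector $w$ obtained from $v$ by replacing $v_i$ with $0$ satisfies $w \prec v$ (so $w \in I_n$) and agrees with $v$ outside $I$. For the reverse, any $w \in I_n$ agreeing with $v$ off $I \subset Z_v$ would, upon successive replacement of each nonzero coordinate $w_j$ ($j \in I$) by $0 = v_j$, yield a strictly $\prec$-decreasing chain ending at $v$, contradicting $w \prec v$.

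The main obstacle is the monotonicity fact invoked repeatedly above: if $a \prec b$ in $\Z$, then replacing the $i$-th coordinate of any vector in $\Z^k$ by $a$ rather than $b$ produces a strictly $\prec$-smaller vector in $\Z^k$. This requires a short case analysis on the recursive definition of $\prec$ --- splitting on whether $b$ exceeds the $\prec$-maximum of the other coordinates, and on the tie-breaking by first-position-of-maximum --- but is otherwise routine, and once in hand all of the arguments above go through cleanly.
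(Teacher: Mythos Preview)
The paper gives no proof here beyond the sentence ``in light of Theorem \ref{MyTheorem} and Theorem \ref{ZNBoundary},'' and your argument is exactly the natural way to make that sentence rigorous: compress, apply Theorem \ref{ZNBoundary} (with $d=0$) to identify $f$ with $|\partial\cdot|$ on the compressed set, then invoke Theorem \ref{MyTheorem}. You also correctly isolate the one nontrivial extra ingredient the paper suppresses---that central $i$-compression does not increase $f$ itself (Proposition \ref{Compression1} and Theorem \ref{ZNBoundary} only control $|\partial|$, and $f(X)\ne|\partial X|$ for general $X$)---and your justification (that $|P_I|$ is unchanged when $i\in I$, while for $i\notin I$ a union of centered intervals in $\Z$ equals the largest one) is correct.

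Two minor points. First, your termination monovariant $\sum_{x\in X}\|x\|^2$ does not strictly decrease at every nontrivial step: the fiber $\{-1,0\}$ is not centrally compressed in the paper's convention, yet compressing it to $\{0,1\}$ leaves the sum of squares unchanged. A monovariant that works is $\sum_{x\in X}\sum_{i} r(x_i)$, where $r(a)$ denotes the position of $a$ in the $\prec$-order on $\Z$; this strictly decreases whenever some fiber is not already the $\prec$-initial segment of its size. Second, your separate induction for $f(I_n)=|\partial I_n|$ is more work than needed: the coordinate-monotonicity fact you already establish shows that every one-dimensional $i$-fiber of $I_n$ is a $\prec$-initial segment of $\Z$, hence a centered interval, so $I_n$ is centrally compressed in every coordinate and Theorem \ref{ZNBoundary} applies to it directly.
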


Additionally, in light of Corollary \ref{TheoremN} and Theorem \ref{ZNBoundary}, we have the following Corollary:

\begin{Corollary}
Let $Y_n$ denote the set of all subsets $Y \subset \N^k$ of size $n$.  Define the function $f:Y_n \to \R$ as follows: for $Y \in Y_n$,
\begin{equation*}
f(Y) = \sum_{I \subset \{1, 2, \dots, k\}} |P_I(Y)|.
\end{equation*}
The minimum value of this function is achieved at an initial segment of $\N^k$ of size $n$, according to the well-ordering $\prec_\N$

\end{Corollary}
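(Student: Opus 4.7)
I plan to deduce this corollary by combining Theorem \ref{ZNBoundary} with Corollary \ref{TheoremN} through the following bridge: show that $f$ is itself non-increasing under the downward compressions of Proposition \ref{Compression1}. Specializing Theorem \ref{ZNBoundary} to $\Z^0 \times \N^k = \N^k$, the factor $2^{|I\cap K|}$ collapses to $1$, so for any finite $Y \subset \N^k$ that is downward compressed in every coordinate, $f(Y) = |\partial Y|$. In particular the initial segment $I_n$ under $\prec_\N$ is downward compressed in every coordinate---a direct check from the definition of $\prec_\N$, since reducing any positive $j$-th coordinate of a point while fixing the others produces an earlier element of $\prec_\N$---so $f(I_n) = |\partial I_n|$. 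Once I know that $f(Y) \ge f(Y^*)$ for some $Y^*$ with $|Y^*| = n$ that is compressed in every coordinate, I will be done: Corollary \ref{TheoremN} gives $|\partial I_n| \le |\partial Y^*|$, and chaining yields
\begin{equation*}
f(I_n) = |\partial I_n| \le |\partial Y^*| = f(Y^*) \le f(Y).
\end{equation*}

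The key lemma to establish is: for any $Y \subset \N^k$ and any coordinate $j \in \{1,\dots,k\}$, the $j$-th downward compression $Y_j$ satisfies $f(Y_j) \le f(Y)$. I would split the sum defining $f$ by whether $j \in I$. If $j \in I$, then $P_I$ deletes coordinate $j$, so $P_I(Y_j) = P_I(Y)$, since compression only reshuffles values along $j$-lines without changing which $(k-1)$-tuples appear after deletion of the $j$-th entry. If $j \notin I$, I would show $|P_I(Y_j)| \le |P_I(Y)|$ by fibering each projection over the smaller projection $P_{I \cup \{j\}}$: for each $r \in P_{I \cup \{j\}}(Y) = P_{I \cup \{j\}}(Y_j)$, the $r$-fiber of $P_I(Y_j)$ equals $\{0,1,\dots,M_r - 1\}$ where $M_r = \max_p N(p)$, with $N(p)$ the size of $Y$ on the $j$-line through $p$ and $p$ ranging over points whose $(I^c \setminus \{j\})$-coordinates equal $r$; meanwhile the corresponding fiber in $P_I(Y)$ is the union of the actual $j$-values along those same lines, which already has cardinality at least $M_r$. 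Summing over $r$ gives the inequality.

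To finish, I would iterate downward compressions through all coordinates. A single compression weakly decreases the monovariant $\sum_{y \in Y}\sum_{j=1}^k y_j$, with strict decrease whenever $Y$ fails to be compressed in the chosen coordinate; since this monovariant is a nonnegative integer, after finitely many steps the set stabilizes at some $Y^*$ that is compressed in every coordinate, with $|Y^*| = n$ and $f(Y^*) \le f(Y)$. I expect the main obstacle to be the fiber argument in the key lemma---it is conceptually clean but demands careful indexing of which coordinates are projected away versus preserved, so the bookkeeping around the decomposition $I^c = (I^c \setminus \{j\}) \sqcup \{j\}$ and the reduction to a $\max_p N(p)$ over the affected lines deserves the most attention.
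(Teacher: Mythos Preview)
Your argument is correct and completes what the paper leaves implicit: the paper merely states that this corollary holds ``in light of Corollary~\ref{TheoremN} and Theorem~\ref{ZNBoundary}'' without giving any further proof, so your route through those two results is exactly the intended one. The ingredient you add---that $f$ itself is non-increasing under each downward compression, established by your fiberwise comparison over $P_{I\cup\{j\}}$---is precisely the bridge needed to pass from an arbitrary $Y$ to a fully compressed $Y^*$ while controlling $f$ rather than $|\partial(\cdot)|$, and that comparison (together with your monovariant termination argument) is sound.
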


\section{Final Remarks}\label{Final Remarks}

The calculations in Section \ref{Boundary Computations} allow us to understand some of the behavior of the boundaries of sets in our graph.  These calculations rely on the technique which we call centralization.  While compression is a more powerful technique, it requires that sets of minimal boundary be nested.  The process of centralizing and computing boundaries does not require that sets of minimal boundary have any particular form.  Perhaps these ideas can be used in other situations where the sets of minimal boundary are more complicated.

\bibliographystyle{plain}
\bibliography{VertexIsoperimetry}
\end{document}